\DeclareRobustCommand{\lyxsout}[1]{\ifx\\#1\else\sout{#1}\fi}
\numberwithin{equation}{section}
\numberwithin{figure}{section}
\theoremstyle{plain}
\newtheorem{thm}{\protect\theoremname}
  \theoremstyle{plain}
  \newtheorem{lem}[thm]{\protect\lemmaname}
  \theoremstyle{remark}
  \newtheorem{rem}[thm]{\protect\remarkname}
  \providecommand{\lemmaname}{Lemma}
  \providecommand{\remarkname}{Remark}
\providecommand{\theoremname}{Theorem}
\begin{document}

\title{The Stein Characterization of $M$-Wright Distributions}

\author{\textbf{Jos{\'e} Lu{\'\i}s da Silva},\\
CCM, University of Madeira, Campus da Penteada,\\
9020-105 Funchal, Portugal.\\
Email: luis@uma.pt\and \textbf{Mohamed Erraoui}\\
Universit{\'e} Cadi Ayyad, Facult{\'e} des Sciences Semlalia,\\
 D{\'e}partement de Math{\'e}matiques, BP 2390, Marrakech, Maroc\\
Email: erraoui@uca.ma}
\maketitle
\begin{abstract}
In this paper use the Stein method to characterize the $M$-Wright
distribution $M_{\nicefrac{1}{3}}$ and its symmetrization. The Stein
operator is associated with the general Airy equation and the corresponding
Stein equation is nothing but a general inhomogeneous Airy equation.\\
\\
\textbf{Keywords}: Stein method, M-Wright distribution, Airy equation.
\end{abstract}

\section{Introduction}

Stein's method is a powerful technic used for studying
approximations of probability distributions, and is best known for
its ability to establish convergence rates. It was initially conceived
by Charles Stein in the seminal paper \cite{Stein1972} to provide
errors in the approximation by the normal distribution of the distribution
of the sum of dependent random variables of a certain structure. However,
the ideas presented are sufficiently powerful to be able to work well
beyond that intended purpose, applying to approximation of more general
random variables by distributions other than the normal (such as the
Poisson, exponential, Gamma, etc). We refer to \cite{Ross2011,Barbour:2005ur,CGS2011,Stein:1986tz},
and references therein for more details on this method. 

The $M$-Wright probability density function $M_{\beta}$, $0<\beta<1$
defined on $\mathbb{R}_{+}$, was introduced by F.\ Mainardi in order
to study time-fractional diffusion-wave equation, see \cite{Mainardi1994a}.
The function $M_{\beta}$ is a special case of the Wright function,
see \cite{Mainardi_Mura_Pagnini_2010} for more details. The distribution
$\nu_{\beta}$ on $\mathbb{R}_{+}$ with density $M_{\beta}$ with
respect to the Lebesgue measure, i.e., $d\nu_{\beta}(x)=M_{\beta}(x)\,dx$,
$x\ge0$, we call $M$-Wright distribution. Its Laplace transform
is given by
\[
\int_{0}^{\infty}e^{-xt}M_{\beta}(x)\,dx=E_{\beta}(-t),\quad\forall t\ge0,
\]
where $E_{\beta}$ is the Mittag-Leffler function, cf.\ \cite{GKMS2014}
for details, defined by
\begin{equation}
E_{\beta}(z):=\sum_{n=0}^{\infty}\frac{z^{n}}{\Gamma(\beta n+1)},\qquad z\in\mathbb{C}.\label{eq:Mittag-Leffler-function}
\end{equation}
The density $M_{\nicefrac{1}{q}}$, $q=2,3,\ldots$ satisfies the
ODE of order $q-1$, cf.\ \cite{Mainardi1994} 
\begin{equation}
\frac{d^{q-1}}{dx^{q-1}}M_{\nicefrac{1}{q}}(x)+\frac{(-1)^{q}}{q}xM_{\nicefrac{1}{q}}(x)=0,\quad x\ge0.\label{eq:general_MWf_ode}
\end{equation}
For the special cases $q=2$ and $q=3$, we have
\begin{eqnarray}
M_{\nicefrac{1}{2}}(x) & = & \frac{1}{\sqrt{\pi}}\exp\left(-\frac{x^{2}}{4}\right),\nonumber \\
M_{\nicefrac{1}{3}}(x) & = & 3^{\nicefrac{2}{3}}\mathrm{Ai}\left(\frac{x}{3^{\nicefrac{1}{3}}}\right),\label{eq:MWright_and_Airy}
\end{eqnarray}
where $\mathrm{Ai}$ is the Airy function, see Appendix\ \ref{sec:special_functions}.
The density $M_{\nicefrac{1}{2}}$ was characterized in the pioneering
work of Stein \cite{Stein1972}, see also \cite{Stein:1986tz} for
more details. 

In this paper we investigate the Stein method for the distribution
$M_{\nicefrac{1}{3}}$ as well as its symmetrization $\hat{M}_{\nicefrac{1}{3}}$
and characterize them via the Stein equation, see Theorems\ \ref{thm:charact_MW_dist}
and \ref{thm:charact_MW_sym} below. The distribution $\hat{M}_{\nicefrac{1}{3}}$
is squeezed between the Gaussian and the symmetric Laplacian distributions,
see Figure\ \ref{fig:Plots-MWright}, which were characterized in
\cite{Stein:1986tz} and \cite{Pike2014}, respectively. In Section\ \ref{sec:Stein-MWright}
we characterize the distribution $M_{\nicefrac{1}{3}}$ and in Section\ \ref{sec:Stein_MWright-sym}
we do the same for the symmetric case $\hat{M}_{\nicefrac{1}{3}}$. 

It follows from equation (\ref{eq:general_MWf_ode}), with $q=3$,
that $M_{\nicefrac{1}{3}}$ solves the Airy differential equation
\begin{equation}
y''(x)-\frac{1}{3}xy(x)=0,\quad x\ge0.\label{eq:Airy_ODE}
\end{equation}
It is well known that the two independent solutions of equation (\ref{eq:Airy_ODE})
are the Airy functions of the first and second kind, $\mathrm{Ai}$
and $\mathrm{Bi}$, respectively, see Appendix\ \ref{sec:special_functions}
for more details on these functions. The first step in extending the
Stein method for the distribution $M_{\nicefrac{1}{3}}$ is to find
a proper Stein equation from (\ref{eq:Airy_ODE}). This leads us to
consider as Stein's equation the inhomogeneous Airy differential equation
\begin{equation}
y''(x)-\frac{1}{3}xy(x)=h(x)-\mathbb{E}\big(h(Y)\big),\quad x\ge0,\label{eq:Stein_equation0}
\end{equation}
where $Y$ is a random variable with density $M_{\nicefrac{1}{3}}$
and $h$ is a suitable function, see Lemma\ \ref{lem:Stein_solution}
below for details. The solution of equation (\ref{eq:Stein_equation0})
is obtained from the two linearly independent solutions $\mathrm{Ai}$
and $\mathrm{Bi}$ of equation (\ref{eq:Airy_ODE}) using the method
of variation of parameters. We show that for any real-valued continuous
bounded function $h$ on $\mathbb{R}_{+}$ the solution of equation\ (\ref{eq:Stein_equation0})
belongs to $C_{b}^{2}(\mathbb{R}_{+})$ (the space of bounded twice
continuously differentiable functions on $\mathbb{R}_{+}$ with bounded
derivatives), see Lemma\ \ref{thm:charact_MW_sym}. Once this done
we characterize $M_{\nicefrac{1}{3}}$ via the second order Stein's
operator 
\begin{equation}
(\mathcal{A}_{\nicefrac{1}{3}}f)(x):=f''(x)-\frac{1}{3}xf(x),\quad x\ge0,\label{eq:Stein-Operator}
\end{equation}
for a sufficiently smooth function $f$, see Section\ \ref{sec:Stein-MWright}
for details. This is the contents of Section\ \ref{sec:Stein-MWright}. 

In Section\ \ref{sec:Stein_MWright-sym} we apply the above scheme
to the symmetric density $\hat{M}_{\nicefrac{1}{3}}$ on $\mathbb{R}$.
It turns out that $\hat{M}_{\nicefrac{1}{3}}$ solves the ODE 
\begin{equation}
y''(x)-\frac{1}{3}|x|y(x)=0,\quad x\in\mathbb{R},\label{eq:Stein_equation_sym-1}
\end{equation}
and as Stein's equation we consider 
\begin{equation}
y''(x)-\frac{1}{3}|x|y(x)=\hat{h}(x),\label{eq:Stein-equation-sym-nonhom}
\end{equation}
where $\hat{h}$ is defined 
\[
\hat{h}(x):=\big[h(x)-\mathbb{E}\big(h(Y)\big)\big]1\!\!1_{[0,\infty)}(x)+\big[h(x)-\mathbb{E}\big(h(-Y)\big)\big]1\!\!1_{(-\infty,0)}(x).
\]
Notice that in general $\hat{h}$ is not continuous at $x=0$ which
implies less regularity of the solution of equation\ (\ref{eq:Stein-equation-sym-nonhom}).
More precisely, in Lemma\ \ref{lem:Solution_Stein_sym} we show that
the solution $f$ of equation\ (\ref{eq:Stein-equation-sym-nonhom})
is such $f\in C_{b}^{1}(\mathbb{R})$ and $f''\in C_{b}(\mathbb{R}^{*})$,
see also Remark\ \ref{rem:Stein-sym-h} for more details. Here and
below $C_{b}^{k}(X)$ denotes the Banach space of bounded $k$th continuous
differentiable functions $f:X\longrightarrow\mathbb{R}$ endowed with
the supremum norm $\|\cdot\|_{\infty}$.

\section{The Stein Characterization of $M_{\nicefrac{1}{3}}$ Distribution}

\label{sec:Stein-MWright}In this section we use the Stein method
to characterize the distribution $M_{\nicefrac{1}{3}}$. To this end,
at first we introduce the functional framework. Define the space $\mathcal{D}_{\nicefrac{1}{3}}$
of functions in $C_{b}^{2}(\mathbb{R}_{+})$ such that 
\begin{equation}
\frac{f'(0)}{\Gamma(\nicefrac{2}{3})}-\frac{f(0)}{\Gamma(\nicefrac{1}{3})}=0,\label{eq:boundary_condition}
\end{equation}
and consider the operator $(\mathcal{A}_{\nicefrac{1}{3}},\mathcal{D}_{\nicefrac{1}{3}})$. 

The following theorem states the main result of this section.
\begin{thm}[Characterization of $M_{\nicefrac{1}{3}}$]
\label{thm:charact_MW_dist}Let $X$ be a positive random variable.
Then $X$ follows the $M_{\nicefrac{1}{3}}$ distribution if and only
if 
\[
\mathbb{E}\big((\mathcal{A}_{\nicefrac{1}{3}}f)(X)\big)=0,\quad\forall f\in\mathcal{D}_{\nicefrac{1}{3}}.
\]
\end{thm}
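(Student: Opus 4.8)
My plan is to prove the two implications separately: the \textbf{necessity} direction is a double integration by parts against the density $M_{\nicefrac{1}{3}}$, while the \textbf{sufficiency} direction exploits the solvability of the Stein equation~(\ref{eq:Stein_equation0}) guaranteed by Lemma~\ref{lem:Stein_solution}.

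For necessity I would fix $f\in\mathcal{D}_{\nicefrac{1}{3}}$, write $\mathbb{E}\big((\mathcal{A}_{\nicefrac{1}{3}}f)(X)\big)=\int_{0}^{\infty}\big(f''(x)-\frac{1}{3}xf(x)\big)M_{\nicefrac{1}{3}}(x)\,dx$, and integrate $\int_{0}^{\infty}f''(x)M_{\nicefrac{1}{3}}(x)\,dx$ by parts twice so that the two derivatives fall on $M_{\nicefrac{1}{3}}$. Since $f,f',f''$ are bounded while $M_{\nicefrac{1}{3}}(x)=3^{\nicefrac{2}{3}}\mathrm{Ai}(x/3^{\nicefrac{1}{3}})$ and $M_{\nicefrac{1}{3}}'$ decay super-exponentially, all the integrals converge and the boundary contributions at $+\infty$ vanish, leaving
\[
\mathbb{E}\big((\mathcal{A}_{\nicefrac{1}{3}}f)(X)\big)=-M_{\nicefrac{1}{3}}(0)f'(0)+M_{\nicefrac{1}{3}}'(0)f(0)+\int_{0}^{\infty}f(x)\big(M_{\nicefrac{1}{3}}''(x)-\frac{1}{3}xM_{\nicefrac{1}{3}}(x)\big)\,dx.
\]
The last integral is zero because $M_{\nicefrac{1}{3}}$ solves the Airy equation~(\ref{eq:Airy_ODE}); and inserting the initial values of the Airy function (Appendix~\ref{sec:special_functions}), which give $M_{\nicefrac{1}{3}}(0)=1/\Gamma(\nicefrac{2}{3})$ and $M_{\nicefrac{1}{3}}'(0)=-1/\Gamma(\nicefrac{1}{3})$, turns the surviving boundary term into a nonzero constant multiple of the left-hand side of the defining relation~(\ref{eq:boundary_condition}) of $\mathcal{D}_{\nicefrac{1}{3}}$; hence it vanishes and $\mathbb{E}\big((\mathcal{A}_{\nicefrac{1}{3}}f)(X)\big)=0$.

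For sufficiency, assuming $\mathbb{E}\big((\mathcal{A}_{\nicefrac{1}{3}}f)(X)\big)=0$ for every $f\in\mathcal{D}_{\nicefrac{1}{3}}$, I would take an arbitrary bounded continuous function $h$ on $\mathbb{R}_{+}$ and solve the Stein equation~(\ref{eq:Stein_equation0}) with right-hand side $h-\mathbb{E}(h(Y))$. By Lemma~\ref{lem:Stein_solution} this equation has a solution $f_{h}\in C_{b}^{2}(\mathbb{R}_{+})$ satisfying the boundary relation~(\ref{eq:boundary_condition}), i.e.\ $f_{h}\in\mathcal{D}_{\nicefrac{1}{3}}$. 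Applying the hypothesis to $f_{h}$ then gives $\mathbb{E}(h(X))-\mathbb{E}(h(Y))=\mathbb{E}\big((\mathcal{A}_{\nicefrac{1}{3}}f_{h})(X)\big)=0$, i.e.\ $\mathbb{E}(h(X))=\mathbb{E}(h(Y))$ for all bounded continuous $h$. Since such functions form a determining class for Borel probability measures on $\mathbb{R}_{+}$, the laws of $X$ and $Y$ coincide and $X$ follows the $M_{\nicefrac{1}{3}}$ distribution.

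I expect the principal difficulty to lie in the sufficiency step, specifically in the fact that the bounded solution of the Stein equation genuinely belongs to $\mathcal{D}_{\nicefrac{1}{3}}$: among the one-parameter family of bounded $C_{b}^{2}$ solutions (obtained from one particular solution by adding multiples of the decaying homogeneous solution $\mathrm{Ai}(x/3^{\nicefrac{1}{3}})$), exactly one satisfies the boundary condition~(\ref{eq:boundary_condition}) at the origin, and it is this solution that Lemma~\ref{lem:Stein_solution} has to pick out via the variation-of-parameters representation built on $\mathrm{Ai}$ and $\mathrm{Bi}$. By contrast, the necessity direction is routine once the super-exponential decay of $\mathrm{Ai}$ is used to discard the boundary terms at infinity.
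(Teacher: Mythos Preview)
Your proposal is correct and follows essentially the same route as the paper: double integration by parts against $M_{\nicefrac{1}{3}}$ for necessity (using the Airy equation, the initial values $M_{\nicefrac{1}{3}}(0)=1/\Gamma(\nicefrac{2}{3})$, $M_{\nicefrac{1}{3}}'(0)=-1/\Gamma(\nicefrac{1}{3})$, and the boundary condition~(\ref{eq:boundary_condition})), and invoking Lemma~\ref{lem:Stein_solution} to produce $f_{h}\in\mathcal{D}_{\nicefrac{1}{3}}$ for sufficiency. Your additional remarks on why the variation-of-parameters solution is the one in $\mathcal{D}_{\nicefrac{1}{3}}$ are accurate but go beyond what the paper records.
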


Before proving Theorem\ \ref{thm:charact_MW_dist} we need a technical
lemma.
\begin{lem}
\noindent \label{lem:Stein_solution}Let $h:\mathbb{R}_{+}\longrightarrow\mathbb{R}$
be a bounded continuous function and denote by $\tilde{h}(x):=h(x)-\mathbb{E}\big(h(Y)\big)$
where $Y$ has $M_{\nicefrac{1}{3}}$ distribution. Then the function
\begin{equation}
f_{h}(x)=-3^{\nicefrac{1}{3}}\pi\left[\mathrm{Ai}\left(\frac{x}{3^{\nicefrac{1}{3}}}\right)\int_{0}^{x}\mathrm{Bi}\left(\frac{t}{3^{\nicefrac{1}{3}}}\right)\tilde{h}(t)\,dt+\mathrm{Bi}\left(\frac{x}{3^{\nicefrac{1}{3}}}\right)\int_{x}^{\infty}\mathrm{Ai}\left(\frac{t}{3^{\nicefrac{1}{3}}}\right)\tilde{h}(t)\,dt\right],\label{eq:Stein_solution}
\end{equation}
 solves the Stein equation (\ref{eq:Stein_equation0}). In addition,
there exists non-negative constants $\tilde{C}_{1}$, $\tilde{C}_{2}$
and $\tilde{C}_{3}$ such that $\|f_{h}\|_{\infty}\le\tilde{C}_{1}\|\tilde{h}\|_{\infty}$,
$\|f_{h}'\|_{\infty}\le\tilde{C}_{2}\|\tilde{h}\|_{\infty}$, $\|f_{h}''\|_{\infty}\le\tilde{C}_{3}\|\tilde{h}\|_{\infty}$
and $f_{h}$ belongs to $\mathcal{D}_{\nicefrac{1}{3}}$. 
\end{lem}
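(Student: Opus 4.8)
The plan is to split the argument into three parts: first to verify by differentiation that $f_h$ solves the Stein equation~(\ref{eq:Stein_equation0}), then to establish the three supremum bounds from the asymptotics of the Airy functions, and finally to check the boundary condition~(\ref{eq:boundary_condition}). Throughout write $u(x):=\mathrm{Ai}(x/3^{\nicefrac{1}{3}})$, $v(x):=\mathrm{Bi}(x/3^{\nicefrac{1}{3}})$, $I_1(x):=\int_0^x v(t)\tilde{h}(t)\,dt$ and $I_2(x):=\int_x^\infty u(t)\tilde{h}(t)\,dt$, so that $f_h=-3^{\nicefrac{1}{3}}\pi\,(uI_1+vI_2)$; since $h$ is bounded so is $\tilde{h}$, and the super-exponential decay of $\mathrm{Ai}$ makes $I_2$ absolutely convergent. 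By~(\ref{eq:Airy_ODE}) we have $u''=\tfrac{1}{3}xu$ and $v''=\tfrac{1}{3}xv$, and rescaling the Wronskian identity $\mathrm{Ai}(z)\mathrm{Bi}'(z)-\mathrm{Ai}'(z)\mathrm{Bi}(z)=1/\pi$ gives $u(x)v'(x)-u'(x)v(x)=3^{-\nicefrac{1}{3}}/\pi$. Differentiating $f_h$ once, the two terms $u(x)v(x)\tilde{h}(x)$ and $-v(x)u(x)\tilde{h}(x)$ coming from the variable integration limits cancel and $f_h'=-3^{\nicefrac{1}{3}}\pi\,(u'I_1+v'I_2)$; differentiating again and inserting the expressions for $u''$, $v''$ and the Wronskian yields $f_h''(x)=\tfrac{1}{3}xf_h(x)+\tilde{h}(x)$, which is precisely~(\ref{eq:Stein_equation0}). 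Since $u,v$ are smooth and $\tilde{h}$ is continuous, these same formulas show $f_h\in C^2(\mathbb{R}_+)$.

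The substance of the proof is the bounds. I would use the large-argument asymptotics recalled in Appendix~\ref{sec:special_functions}: $\mathrm{Ai}(z)$ and $\mathrm{Ai}'(z)$ decay like $z^{-\nicefrac{1}{4}}e^{-\zeta(z)}$ and $z^{\nicefrac{1}{4}}e^{-\zeta(z)}$, while $\mathrm{Bi}(z)$ and $\mathrm{Bi}'(z)$ grow like $z^{-\nicefrac{1}{4}}e^{\zeta(z)}$ and $z^{\nicefrac{1}{4}}e^{\zeta(z)}$, with $\zeta(z)=\tfrac{2}{3}z^{\nicefrac{3}{2}}$; after $z=x/3^{\nicefrac{1}{3}}$ the exponent becomes $\kappa x^{\nicefrac{3}{2}}$ with $\kappa:=2/(3\sqrt{3})$. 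A Laplace-type estimate (integration by parts against $e^{\pm\kappa t^{\nicefrac{3}{2}}}$) then gives, for large $x$, $\int_0^x|v(t)|\,dt\le C\,x^{-\nicefrac{3}{4}}e^{\kappa x^{\nicefrac{3}{2}}}$ and $\int_x^\infty|u(t)|\,dt\le C\,x^{-\nicefrac{3}{4}}e^{-\kappa x^{\nicefrac{3}{2}}}$. Bounding $|\tilde{h}|\le\|\tilde{h}\|_\infty$, the exponential factors cancel exactly in each of the two products making up $f_h$ and $f_h'$, so $|f_h(x)|\le C(1+x)^{-1}\|\tilde{h}\|_\infty$ and $|f_h'(x)|\le C(1+x)^{-\nicefrac{1}{2}}\|\tilde{h}\|_\infty$, while for $x$ in a compact set the same bounds are immediate from the explicit continuous expression~(\ref{eq:Stein_solution}); this yields $\tilde{C}_1$ and $\tilde{C}_2$. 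Finally $f_h''=\tfrac{1}{3}xf_h+\tilde{h}$ together with $x|f_h(x)|\le C\|\tilde{h}\|_\infty$ gives $\|f_h''\|_\infty\le\tilde{C}_3\|\tilde{h}\|_\infty$, and continuity of $\tilde{h}$ makes $f_h''\in C_b(\mathbb{R}_+)$, so $f_h\in C_b^2(\mathbb{R}_+)$. I expect this to be the delicate step: one must keep track of the multiplicative constants in the Airy asymptotics to confirm that the exponential growth of $\mathrm{Bi}$ is exactly absorbed by the decay of the $\mathrm{Ai}$-tail integral (and symmetrically), so that the formally singular products $uI_1$ and $vI_2$ are in fact uniformly bounded.

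It remains to verify~(\ref{eq:boundary_condition}). Since $I_1(0)=0$, evaluating the formulas above at $x=0$ gives $f_h(0)=-3^{\nicefrac{1}{3}}\pi\,v(0)J$ and $f_h'(0)=-3^{\nicefrac{1}{3}}\pi\,v'(0)J$, where $J:=\int_0^\infty\mathrm{Ai}(t/3^{\nicefrac{1}{3}})\tilde{h}(t)\,dt$. I claim $J=0$: from $M_{\nicefrac{1}{3}}(t)=3^{\nicefrac{2}{3}}\mathrm{Ai}(t/3^{\nicefrac{1}{3}})$ and $\int_0^\infty M_{\nicefrac{1}{3}}(t)\,dt=1$ we get $\int_0^\infty\mathrm{Ai}(t/3^{\nicefrac{1}{3}})\,dt=3^{-\nicefrac{2}{3}}$, while $\mathbb{E}\big(h(Y)\big)=\int_0^\infty h(t)M_{\nicefrac{1}{3}}(t)\,dt=3^{\nicefrac{2}{3}}\int_0^\infty h(t)\mathrm{Ai}(t/3^{\nicefrac{1}{3}})\,dt$, hence $J=\int_0^\infty\mathrm{Ai}(t/3^{\nicefrac{1}{3}})h(t)\,dt-3^{-\nicefrac{2}{3}}\mathbb{E}\big(h(Y)\big)=0$. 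Therefore $f_h(0)=f_h'(0)=0$, so~(\ref{eq:boundary_condition}) holds trivially, and together with the previous paragraph $f_h\in\mathcal{D}_{\nicefrac{1}{3}}$, completing the proof.
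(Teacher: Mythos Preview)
Your argument is correct. The differences from the paper lie in the bounds and in the boundary check. For the estimates, the paper does not unpack the Airy asymptotics: it recognizes that after the change of variable $t\mapsto 3^{\nicefrac{1}{3}}t$ the quantity $\mathrm{Ai}(z)\int_0^z\mathrm{Bi}+\mathrm{Bi}(z)\int_z^\infty\mathrm{Ai}$ is exactly the Scorer function $\mathrm{Gi}(z)$, so $|f_h(x)|\le 3^{\nicefrac{2}{3}}\pi\|\mathrm{Gi}\|_\infty\|\tilde h\|_\infty$, $|f_h'(x)|\le 3^{\nicefrac{1}{3}}\pi\|\mathrm{Gi}'\|_\infty\|\tilde h\|_\infty$ and $|f_h''(x)|\le(3^{-\nicefrac{2}{3}}\pi\sup_{x\ge0}|x\mathrm{Gi}(x)|+1)\|\tilde h\|_\infty$, invoking the known boundedness of $\mathrm{Gi}$, $\mathrm{Gi}'$ and $x\mapsto x\mathrm{Gi}(x)$ from the appendix. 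Your Laplace-type estimate of $\int_0^x\mathrm{Bi}$ and $\int_x^\infty\mathrm{Ai}$ is exactly what one would do to prove these Scorer bounds from scratch, so you are reproving a special-function fact the paper simply quotes; the Scorer identification is tidier and gives explicit constants, while your route is more self-contained. For the boundary condition the paper only checks that $f_h'(0)/\Gamma(\nicefrac{2}{3})=f_h(0)/\Gamma(\nicefrac{1}{3})$ via the ratio $\mathrm{Bi}'(0)/\mathrm{Bi}(0)$, without using the specific form of $\tilde h$; your observation that in fact $J=0$, hence $f_h(0)=f_h'(0)=0$, is stronger and is precisely what the paper exploits later in Lemma~\ref{lem:Solution_Stein_sym} for the symmetric case.
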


\begin{proof}
\noindent First consider $w_{1}$ and $w_{2}$ the two independent
solutions of the corresponding homogeneous Stein equation (\ref{eq:Stein_equation0}).
They are given in terms of the Airy functions
\begin{eqnarray}
w_{1}(x) & = & M_{\nicefrac{1}{3}}(x)=3^{\nicefrac{2}{3}}\mathrm{Ai}\left(\frac{x}{3^{\nicefrac{1}{3}}}\right),\quad x\ge0,\label{eq:Airy1}\\
w_{2}(x) & = & 3^{\nicefrac{2}{3}}\mathrm{Bi}\left(\frac{x}{3^{\nicefrac{1}{3}}}\right),\quad x\ge0.\label{eq:Airy2}
\end{eqnarray}
The solution of the inhomogeneous Stein equation is given using the
method of the variation of the parameters, see for example \cite{Collins2007},
in explicit
\begin{equation}
f_{h}(x)=-w_{1}(x)\int_{0}^{x}\frac{w_{2}(t)\tilde{h}(t)}{W(t)}\,dt-w_{2}(x)\int_{x}^{\infty}\frac{w_{1}(t)\tilde{h}(t)}{W(t)}\,dt,\label{eq:Stein_gsolution}
\end{equation}
where $W$ is the Wronskian of the solutions $w_{1},w_{2}$ equal
to $\nicefrac{3}{\pi}$. Hence, the solution (\ref{eq:Stein_solution})
results from (\ref{eq:Stein_gsolution}) using (\ref{eq:Airy1})-(\ref{eq:Stein_gsolution}).
As $h$ is a continuous function, then it follows that $f_{h}$ is
twice continuously differentiable. Moreover, for any $x\geq0$, we
have
\begin{eqnarray*}
|f_{h}(x)| & \leq & 3^{\nicefrac{1}{3}}\pi\|\tilde{h}\|_{\infty}\left(\mathrm{Ai}\left(\frac{x}{3^{\nicefrac{1}{3}}}\right)\int_{0}^{x}\mathrm{Bi}\left(\frac{t}{3^{\nicefrac{1}{3}}}\right)\,dt+\mathrm{Bi}\left(\frac{x}{3^{\nicefrac{1}{3}}}\right)\int_{x}^{\infty}\mathrm{Ai}\left(\frac{t}{3^{\nicefrac{1}{3}}}\right)\,dt\right)\\
 & = & 3^{\nicefrac{2}{3}}\pi\|\tilde{h}\|_{\infty}\left(\mathrm{Ai}\left(\frac{x}{3^{\nicefrac{1}{3}}}\right)\int_{0}^{\frac{x}{3^{\nicefrac{1}{3}}}}\mathrm{Bi}(t)\,dt+\mathrm{Bi}\left(\frac{x}{3^{\nicefrac{1}{3}}}\right)\int_{\frac{x}{3^{\nicefrac{1}{3}}}}^{\infty}\mathrm{Ai}(t)\,dt\right)\\
 & \leq & 3^{\nicefrac{2}{3}}\pi\|\mathrm{Gi}\|_{\infty}\|\tilde{h}\|_{\infty},
\end{eqnarray*}
where $\mathrm{Gi}$ is the Scorer function, see Appendix\ \ref{sec:special_functions},
equation\ (\ref{eq:ScorerGi-function}). 

\noindent Now differentiating (\ref{eq:Stein_solution}) we obtain
\[
f'_{h}(x)=-\pi\bigg[\mathrm{Ai}'\left(\frac{x}{3^{\nicefrac{1}{3}}}\right)\int_{0}^{x}\mathrm{Bi}\left(\frac{t}{3^{\nicefrac{1}{3}}}\right)\tilde{h}(t)\,dt+\mathrm{Bi}'\left(\frac{x}{3^{\nicefrac{1}{3}}}\right)\int_{x}^{\infty}\mathrm{Ai}\left(\frac{t}{3^{\nicefrac{1}{3}}}\right)\tilde{h}(t)\,dt\bigg]
\]
and estimating as above implies, using equation\ (\ref{eq:Scorer_deriv_asymp}),
yields
\[
|f_{h}'(x)|\leq3^{\nicefrac{1}{3}}\pi\|\mathrm{Gi}'\|_{\infty}\|\tilde{h}\|_{\infty}.
\]
Since $f_{h}$ is the solution of the Stein equation (\ref{eq:Stein_equation0}),
then
\[
|f''_{h}(x)|\leq\frac{1}{3}|xf_{h}(x)|+\|\tilde{h}\|_{\infty}\le3^{-\nicefrac{2}{3}}\pi|x\mathrm{Gi}(x)|\|\tilde{h}\|_{\infty}+\|\tilde{h}\|_{\infty},\quad\forall x\in\mathbb{R}_{+}.
\]
As the function $\mathbb{R}_{+}\ni x\mapsto x\mathrm{Gi}(x)\in\mathbb{R}$
is bounded, cf. Appendix\ \ref{sec:special_functions} equation\ (\ref{eq:ScorerGi_asymptotic}),
it follows that $f_{h}''$ is bounded and we have
\[
|f''_{h}(x)|\leq\big(3^{-\nicefrac{2}{3}}\pi\sup_{x\in\mathbb{R}_{+}}|x\mathrm{Gi}(x)|+1\big)\|\tilde{h}\|_{\infty}.
\]
Finally, to see that $f_{h}$ satisfies equation (\ref{eq:boundary_condition})
we notice that $\mathrm{Bi}(0)=\frac{1}{3^{\nicefrac{1}{6}}\Gamma(\nicefrac{2}{3})}$
and $\mathrm{Bi}'\left(0\right)=\frac{3^{\nicefrac{1}{6}}}{\Gamma(\nicefrac{1}{3})}$.
\end{proof}

\begin{proof}[Proof of Proposition\ \ref{thm:charact_MW_dist}.]
\emph{Necessity}. Let $Y$ be a random variable with $M_{\nicefrac{1}{3}}$
distribution and $f\in\mathcal{D}_{\nicefrac{1}{3}}$. Then, it is
clear that
\[
\mathbb{E}\big((\mathcal{A}f)(Y)\big)=\int_{0}^{\infty}\big(f''(x)-\frac{1}{3}xf(x)\big)M_{\nicefrac{1}{3}}(x)\,dx
\]
and an integration by parts yields
\begin{eqnarray*}
\mathbb{E}\big((\mathcal{A}f)(Y)\big) & = & \int_{0}^{\infty}\big(M''_{\nicefrac{1}{3}}(x)-\frac{1}{3}xM_{\nicefrac{1}{3}}(x)\big)f(x)\,dx\\
 &  & +f'(x)M_{\nicefrac{1}{3}}(x)\big|_{0}^{\infty}-f(x)M'_{\nicefrac{1}{3}}(x)\big|_{0}^{\infty}.
\end{eqnarray*}
Taking into account that $M_{\nicefrac{1}{3}}$ satisfies (\ref{eq:Airy_ODE}),
$M_{\nicefrac{1}{3}}(0)=\nicefrac{1}{\Gamma(\nicefrac{2}{3})}$, $M'_{\nicefrac{1}{3}}(0)=-\nicefrac{1}{\Gamma(\nicefrac{1}{3})}$
and the asymptotic behavior for $M_{\nicefrac{1}{3}}$, $M'_{\nicefrac{1}{3}}$
in terms of the Airy function, see equation\ (\ref{eq:MWright_and_Airy})
and Appendix\ \ref{sec:special_functions}, equation\ (\ref{eq:AiryAi-asymptotic-positiv}),
we obtain $\mathbb{E}\big((\mathcal{A}f)(Y)\big)=0$.

\noindent \emph{Sufficiency}. Let $h$ be a continuous bounded function.
By Lemma\ \ref{lem:Stein_solution}, the function $f_{h}\in\mathcal{D}_{\nicefrac{1}{3}}$
and satisfies the Stein equation (\ref{eq:Stein_equation0}): 
\[
(\mathcal{A}f_{h})(x)=f_{h}''(x)-\frac{1}{3}xf_{h}(x)=\tilde{h}(x).
\]
Taking expectation in both sides, yields 
\[
\mathbb{E}\big((\mathcal{A}f_{h})(X)\big)=\mathbb{E}\big(\tilde{h}(X)\big)=\mathbb{E}\big(h(X)\big)-\mathbb{E}\big(h(Y)\big).
\]
Since $\mathbb{E}\big((\mathcal{A}f_{h})(X)\big)=0$, then 
\[
\mathbb{E}\big(h(X)\big)-\mathbb{E}\big(h(Y)\big)=0.
\]
Consequently, $X$ and $Y$ have the same law.
\end{proof}

\section{Stein's Characterization of the Symmetric $\hat{M}_{\nicefrac{1}{3}}$
Distribution}

\label{sec:Stein_MWright-sym}

The extended and normalized $M$-Wright function $M_{\beta}$, over
the negative real axis as an even function, becomes a probability
density in $\mathbb{R}$. We denote this extension by $\hat{M}_{\beta}$,
it is defined by
\[
\hat{M}_{\nicefrac{1}{3}}(x):=\frac{1}{2}M_{\nicefrac{1}{3}}(|x|):=\begin{cases}
\frac{1}{2}{\displaystyle M_{\nicefrac{1}{3}}(x),} & x\ge0,\\
\\
\frac{1}{2}{\displaystyle M_{\nicefrac{1}{3}}(-x),} & x<0.
\end{cases}
\]
\begin{figure}
\begin{centering}
\includegraphics[scale=0.75]{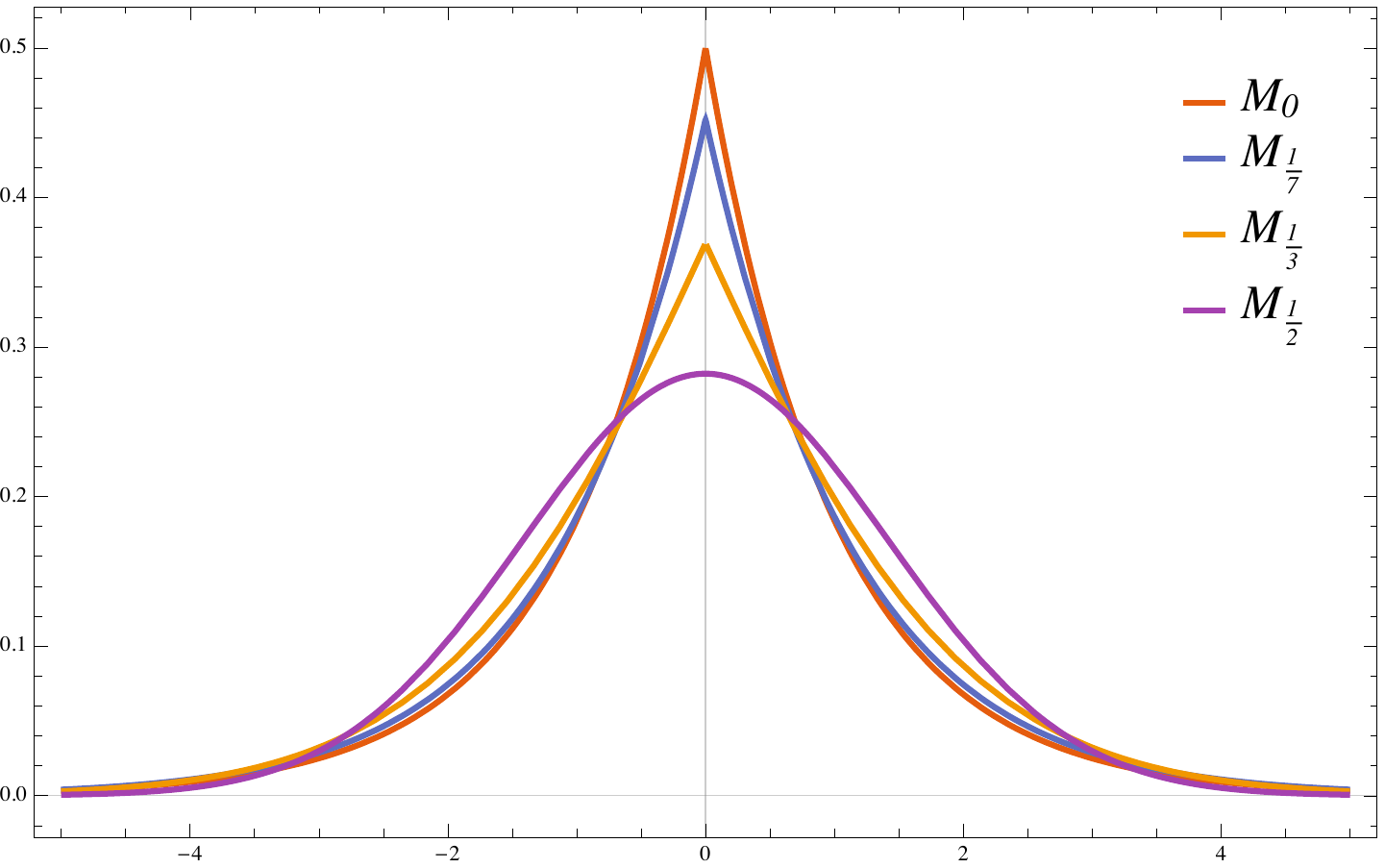}
\par\end{centering}
\caption{\label{fig:Plots-MWright}Plots of the densities $\hat{M}_{0}$, $\hat{M}_{\nicefrac{1}{7}}$, $\hat{M}_{\nicefrac{1}{3}}$ 
and $\hat{M}_{\nicefrac{1}{2}}$.}
\end{figure}
In this section we use the Stein method to characterize $\hat{M}_{\nicefrac{1}{3}}$.
To this end we define the space $\hat{\mathcal{D}}_{\nicefrac{1}{3}}$
by
\[
\hat{\mathcal{D}}_{\nicefrac{1}{3}}:=\{f\in C_{b}^{1}(\mathbb{R}),\;f''\in C_{b}(\mathbb{R}^{*})\,|\,f(0)=0\}.
\]
The Stein operator $\hat{\mathcal{A}}_{\nicefrac{1}{3}}$ on $C_{b}^{2}(\mathbb{R})$
is defined by 
\[
(\hat{\mathcal{A}}_{\nicefrac{1}{3}}f)(x):=f''(x)+\frac{1}{3}|x|f(x),\quad x\in\mathbb{R},
\]
and as Stein's equation associated to $\hat{\mathcal{A}}_{\nicefrac{1}{3}}$
\begin{equation}
y''(x)-\frac{1}{3}|x|y(x)=\hat{h}(x),\label{eq:Stein_equation_sym}
\end{equation}
where 
\begin{equation}
\hat{h}(x):=\big[h(x)-\mathbb{E}\big(h(Y)\big)\big]1\!\!1_{[0,\infty)}(x)+\big[h(x)-\mathbb{E}\big(h(-Y)\big)\big]1\!\!1_{(-\infty,0)}(x),\label{eq:h-hat}
\end{equation}
$h$ is a real-valued function and $Y$ has $M_{\nicefrac{1}{3}}$
distribution. Now we are ready to state the main result of this section.
\begin{thm}
\label{thm:charact_MW_sym}Let $X$ be a real-valued random variable
such that 
\begin{equation}
P(X\ge0)=P(X<0)=\frac{1}{2}.\label{eq:sym_assumption_X}
\end{equation}
Then $X$ follows the $\hat{M}_{\nicefrac{1}{3}}$ distribution if
and only if 
\[
\mathbb{E}\big((\hat{\mathcal{A}}_{\nicefrac{1}{3}}f)(X)\big)=0,\quad\forall f\in\hat{\mathcal{D}}_{\nicefrac{1}{3}}.
\]
\end{thm}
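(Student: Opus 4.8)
The plan is to run the two--sided version of the argument behind Theorem~\ref{thm:charact_MW_dist}: an integration by parts for the necessity, and the Stein equation (\ref{eq:Stein_equation_sym}) together with Lemma~\ref{lem:Solution_Stein_sym} for the sufficiency. The one genuinely new issue is the behaviour at the origin, where $\hat M_{\nicefrac13}$ is only $C^{1}$ (its derivative jumps there) and where the Stein solution has reduced regularity; this is where I expect the real work to be.

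\emph{Necessity.} Assuming $X\sim\hat M_{\nicefrac13}$ and fixing $f\in\hat{\mathcal D}_{\nicefrac13}$, I would write $\mathbb E\big((\hat{\mathcal A}_{\nicefrac13}f)(X)\big)$ as the integral of $(\hat{\mathcal A}_{\nicefrac13}f)\,\hat M_{\nicefrac13}$ over $\mathbb R$, split it at $0$, and integrate by parts twice on each of $(0,\infty)$ and $(-\infty,0)$ to move both derivatives onto $\hat M_{\nicefrac13}$. On each of the two half--lines $\hat M_{\nicefrac13}$ is smooth and solves $y''-\tfrac13|x|y=0$ by (\ref{eq:Stein_equation_sym-1}), so the bulk integrals vanish; the contributions at $\pm\infty$ vanish by the super--exponential decay of $M_{\nicefrac13}$ and $M_{\nicefrac13}'$ (Appendix~\ref{sec:special_functions}, equation~(\ref{eq:AiryAi-asymptotic-positiv})) against the boundedness of $f,f'$. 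What is left are the one--sided boundary terms at $0$, and here I would use two facts: the terms carrying $\hat M_{\nicefrac13}'(0^{\pm})$ are each multiplied by $f(0)$ and therefore vanish because $f(0)=0$ --- which is precisely why this condition is imposed in $\hat{\mathcal D}_{\nicefrac13}$, since $\hat M_{\nicefrac13}'(0^{+})=\tfrac12M_{\nicefrac13}'(0)=-\hat M_{\nicefrac13}'(0^{-})$ would otherwise leave a non--zero residue --- while the terms carrying $f'(0)$ come with $\hat M_{\nicefrac13}(0^{+})$ and $\hat M_{\nicefrac13}(0^{-})$ and cancel against one another because $\hat M_{\nicefrac13}$ is continuous at $0$. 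Hence $\mathbb E\big((\hat{\mathcal A}_{\nicefrac13}f)(X)\big)=0$.

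\emph{Sufficiency.} Assuming (\ref{eq:sym_assumption_X}) and $\mathbb E\big((\hat{\mathcal A}_{\nicefrac13}f)(X)\big)=0$ for all $f\in\hat{\mathcal D}_{\nicefrac13}$, I would fix a bounded continuous $h$ on $\mathbb R$, take $f_{\hat h}$ to be the solution of (\ref{eq:Stein_equation_sym}) provided by Lemma~\ref{lem:Solution_Stein_sym}, and first check $f_{\hat h}\in\hat{\mathcal D}_{\nicefrac13}$: the regularity $f_{\hat h}\in C_{b}^{1}(\mathbb R)$, $f_{\hat h}''\in C_{b}(\mathbb R^{*})$ comes from that lemma, and $f_{\hat h}(0)=0$ follows exactly as in the proof of Lemma~\ref{lem:Stein_solution}, the relevant coefficient being $\int_{0}^{\infty}M_{\nicefrac13}(t)\big(h(t)-\mathbb E(h(Y))\big)\,dt=0$ (and its mirror image on $(-\infty,0)$). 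Applying the hypothesis to $f_{\hat h}$ and using $(\hat{\mathcal A}_{\nicefrac13}f_{\hat h})(x)=\hat h(x)$ gives $\mathbb E\big(\hat h(X)\big)=0$; expanding $\hat h$ by (\ref{eq:h-hat}) and using $P(X\ge0)=P(X<0)=\tfrac12$ turns this into
\[
\mathbb E\big(h(X)\big)-\tfrac12\mathbb E\big(h(Y)\big)-\tfrac12\mathbb E\big(h(-Y)\big)=0,
\]
and the change of variables $x=\pm y$ identifies $\tfrac12\mathbb E(h(Y))+\tfrac12\mathbb E(h(-Y))$ with $\int_{\mathbb R}h(x)\hat M_{\nicefrac13}(x)\,dx$. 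Thus $\mathbb E(h(X))=\int_{\mathbb R}h\,\hat M_{\nicefrac13}\,dx$ for every bounded continuous $h$, so $X$ has law $\hat M_{\nicefrac13}$.

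As indicated, the crux is the origin: in the necessity the bookkeeping of the one--sided boundary terms --- using $f(0)=0$ to kill the jump of $\hat M_{\nicefrac13}'$ and the continuity of $\hat M_{\nicefrac13}$ to cancel the $f'(0)$ terms --- is the part that does not reduce to the calculation in Lemma~\ref{lem:Stein_solution}; and in the sufficiency the identity $(\hat{\mathcal A}_{\nicefrac13}f_{\hat h})(X)=\hat h(X)$ holds only away from $x=0$, so I would add a remark that the single point $\{0\}$ does not affect the distributional conclusion (equivalently, that it is enough to treat $X$ without an atom at $0$). Everything else is a transcription of the arguments already made for Theorem~\ref{thm:charact_MW_dist} and Lemma~\ref{lem:Stein_solution}.
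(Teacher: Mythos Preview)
Your proposal is correct and follows essentially the same route as the paper: split at the origin, integrate by parts twice on each half-line using the decay of $M_{\nicefrac13}$ and $M_{\nicefrac13}'$ at infinity, and let the $f'(0)$ boundary terms cancel by continuity of $\hat M_{\nicefrac13}$ while the $f(0)$ terms vanish by the defining condition of $\hat{\mathcal D}_{\nicefrac13}$; the sufficiency via Lemma~\ref{lem:Solution_Stein_sym} and the symmetry assumption~(\ref{eq:sym_assumption_X}) is likewise identical. If anything, you are more explicit than the paper about why $f(0)=0$ is the right boundary condition (the paper absorbs this into ``it follows from the proof of Theorem~\ref{thm:charact_MW_dist}'') and about the harmless issue at $x=0$ in the sufficiency step.
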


Before proving the above theorem we first show that the solution of
the Stein equation\ (\ref{eq:Stein_equation_sym}) belongs to $\hat{\mathcal{D}}_{\nicefrac{1}{3}}$.
This the contents of the following lemma.
\begin{lem}
\noindent \label{lem:Solution_Stein_sym}Let $h:\mathbb{R}\longrightarrow\mathbb{R}$
be a bounded continuous function. Then the function
\begin{align}
 & f_{\hat{h}}(x)\nonumber \\
 & =-3^{\nicefrac{1}{3}}\pi\left\{ \left[\mathrm{Ai}\left(\frac{x}{3^{\nicefrac{1}{3}}}\right)\int_{0}^{x}\mathrm{Bi}\left(\frac{t}{3^{\nicefrac{1}{3}}}\right)\hat{h}(t)\,dt+\mathrm{Bi}\left(\frac{x}{3^{\nicefrac{1}{3}}}\right)\int_{x}^{\infty}\mathrm{Ai}\left(\frac{t}{3^{\nicefrac{1}{3}}}\right)\hat{h}(t)\,dt\right]1\!\!1_{[0,\infty)}(x)\right.\nonumber \\
 & \left.+\left[\mathrm{Ai}\left(-\frac{x}{3^{\nicefrac{1}{3}}}\right)\int_{x}^{0}\mathrm{Bi}\left(-\frac{t}{3^{\nicefrac{1}{3}}}\right)\hat{h}(t)\,dt+\mathrm{Bi}\left(-\frac{x}{3^{\nicefrac{1}{3}}}\right)\int_{-\infty}^{x}\mathrm{Ai}\left(-\frac{t}{3^{\nicefrac{1}{3}}}\right)\hat{h}(t)\,dt\right]1\!\!1_{(-\infty,0)}(x)\right\} \label{eq:Stein_solution-sym-2}
\end{align}
 solves the Stein equation (\ref{eq:Stein_equation_sym}). In addition,
there exists non-negative constants $\hat{C}_{1}$, $\hat{C}_{2}$
and $\hat{C}_{3}$ such that $\|f_{\hat{h}}\|_{\infty}\le\hat{C}_{1}\|\hat{h}\|_{\infty}$,
$\|f_{\hat{h}}'\|_{\infty}\le\hat{C}_{2}\|\hat{h}\|_{\infty}$, $\|f_{\hat{h}}''\|_{\infty}\le\hat{C}_{3}\|\hat{h}\|_{\infty}$
and $f_{\hat{h}}$ belongs to $\hat{\mathcal{D}}_{\nicefrac{1}{3}}$.
\end{lem}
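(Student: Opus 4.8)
The plan is to deduce everything from Lemma~\ref{lem:Stein_solution}, applied separately on the two half-lines. On $[0,\infty)$ the restriction of $\hat h$ is exactly $\tilde h(x)=h(x)-\mathbb{E}\big(h(Y)\big)$, so the first bracket in (\ref{eq:Stein_solution-sym-2}) is precisely the function $f_h$ from (\ref{eq:Stein_solution}). On $(-\infty,0)$ I would perform the substitutions $x\mapsto -x$ and $t\mapsto -t$ in the two integrals; setting $h_1(s):=h(-s)$, which is bounded and continuous on $\mathbb{R}_{+}$ with $\mathbb{E}\big(h_1(Y)\big)=\mathbb{E}\big(h(-Y)\big)$, a short change of variables turns the second bracket, evaluated at $x<0$, into $f_{h_1}(-x)$, where $f_{h_1}$ is the solution supplied by Lemma~\ref{lem:Stein_solution} for the datum $h_1$. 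Hence $f_{\hat h}$ is $C^2$ on $[0,\infty)$ and on $(-\infty,0]$ separately, and Lemma~\ref{lem:Stein_solution} gives $f_{\hat h}''(x)-\frac13 x f_{\hat h}(x)=\hat h(x)$ for $x>0$ and, after undoing the reflection, $f_{\hat h}''(x)-\frac13(-x)f_{\hat h}(x)=\hat h(x)$ for $x<0$; together these are (\ref{eq:Stein_equation_sym}) on $\mathbb{R}^{*}$. The estimates $\|f_{\hat h}\|_\infty\le\hat C_1\|\hat h\|_\infty$, $\|f_{\hat h}'\|_\infty\le\hat C_2\|\hat h\|_\infty$, $\|f_{\hat h}''\|_\infty\le\hat C_3\|\hat h\|_\infty$ then follow with $\hat C_j=\tilde C_j$, once one notes $\|\tilde h\|_\infty\le\|\hat h\|_\infty$ and $\|\widetilde{h_1}\|_\infty\le\|\hat h\|_\infty$; and since $f_{\hat h}''$ is continuous and bounded on each open half-line, it lies in $C_b(\mathbb{R}^{*})$.

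The one genuine point is the behaviour at the origin, namely that $f_{\hat h}\in C^1(\mathbb{R})$ with $f_{\hat h}(0)=0$. Here I would first record the identity
\[
\int_{0}^{\infty}\mathrm{Ai}\left(\frac{t}{3^{\nicefrac{1}{3}}}\right)\tilde h(t)\,dt=3^{-\nicefrac{2}{3}}\int_{0}^{\infty}M_{\nicefrac{1}{3}}(t)\,\tilde h(t)\,dt=0,
\]
which holds because $M_{\nicefrac{1}{3}}(t)=3^{\nicefrac{2}{3}}\mathrm{Ai}(t/3^{\nicefrac{1}{3}})$ (cf.\ (\ref{eq:MWright_and_Airy})), $\int_{0}^{\infty}M_{\nicefrac{1}{3}}=1$, and $\int_{0}^{\infty}M_{\nicefrac{1}{3}}(t)\tilde h(t)\,dt=\mathbb{E}(h(Y))-\mathbb{E}(h(Y))=0$ by the definition of $\tilde h$; the same applies with $\tilde h$ replaced by $\widetilde{h_1}$. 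Putting $x=0$ in (\ref{eq:Stein_solution-sym-2}), the first integral in each bracket vanishes and the identity above gives $f_{\hat h}(0^{+})=-3^{\nicefrac{1}{3}}\pi\,\mathrm{Bi}(0)\int_{0}^{\infty}\mathrm{Ai}(t/3^{\nicefrac{1}{3}})\tilde h(t)\,dt=0$, and likewise $f_{\hat h}(0^{-})=0$; thus $f_{\hat h}$ is continuous at $0$ with $f_{\hat h}(0)=0$. For the derivative, Lemma~\ref{lem:Stein_solution} gives $f_h,f_{h_1}\in\mathcal{D}_{\nicefrac{1}{3}}$, so both satisfy the boundary relation (\ref{eq:boundary_condition}); combined with $f_h(0)=f_{h_1}(0)=0$ this forces $f_h'(0)=f_{h_1}'(0)=0$, whence $f_{\hat h}'(0^{+})=f_h'(0)=0$ and $f_{\hat h}'(0^{-})=-f_{h_1}'(0)=0$. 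Therefore $f_{\hat h}'$ is continuous at $0$, and together with the half-line regularity we conclude $f_{\hat h}\in C_b^1(\mathbb{R})$, $f_{\hat h}''\in C_b(\mathbb{R}^{*})$ and $f_{\hat h}(0)=0$, that is, $f_{\hat h}\in\hat{\mathcal{D}}_{\nicefrac{1}{3}}$.

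The main obstacle is exactly this matching at the origin: neither the continuity of $f_{\hat h}$ nor that of $f_{\hat h}'$ there is apparent from the variation-of-parameters formula until one exploits the centering of $\hat h$ on each half-line (equivalently, the displayed identity) together with the boundary condition already built into $\mathcal{D}_{\nicefrac{1}{3}}$. One should also observe that $f_{\hat h}''$ genuinely jumps at $0$, by the amount $\mathbb{E}(h(Y))-\mathbb{E}(h(-Y))$, which is precisely why the natural regularity is $f_{\hat h}''\in C_b(\mathbb{R}^{*})$ rather than $C_b(\mathbb{R})$; everything else is a routine transcription of Lemma~\ref{lem:Stein_solution} through the reflection $x\mapsto -x$, together with the bookkeeping $\|\tilde h\|_\infty,\|\widetilde{h_1}\|_\infty\le\|\hat h\|_\infty$ for the norm estimates.
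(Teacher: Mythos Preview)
Your proposal is correct and follows essentially the same route as the paper: reduce to Lemma~\ref{lem:Stein_solution} on each half-line via the reflection $x\mapsto -x$, use the centering of $\hat h$ against $M_{\nicefrac{1}{3}}$ to get $f_{\hat h}(0^{\pm})=0$, and carry over the norm bounds. The only cosmetic difference is that you deduce $f_{\hat h}'(0^{\pm})=0$ from the boundary relation~(\ref{eq:boundary_condition}) together with $f_h(0)=0$, whereas the paper verifies it by the same direct integral computation used for $f_{\hat h}(0)$; both arguments are equivalent.
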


\begin{proof}
If follows from Lemma\ \ref{lem:Stein_solution} that the solution
$f_{\hat{h}}$ of equation (\ref{eq:Stein_equation_sym}) for $x\ge0$
is given by
\begin{equation}
f_{\hat{h}}(x)=-3^{\nicefrac{1}{3}}\pi\left[\mathrm{Ai}\left(\frac{x}{3^{\nicefrac{1}{3}}}\right)\int_{0}^{x}\mathrm{Bi}\left(\frac{t}{3^{\nicefrac{1}{3}}}\right)\hat{h}(t)\,dt+\mathrm{Bi}\left(\frac{x}{3^{\nicefrac{1}{3}}}\right)\int_{x}^{\infty}\mathrm{Ai}\left(\frac{t}{3^{\nicefrac{1}{3}}}\right)\hat{h}(t)\,dt\right].\label{eq:f_one}
\end{equation}
On the other hand, for $x<0$ the equation (\ref{eq:Stein_equation_sym})
turns into
\[
y''(x)+\frac{1}{3}xy(x)=\hat{h}(x).
\]
With a change of variables $u=-x$ the solution $f_{\hat{h}}$ of
the above differential equation is given by
\begin{equation}
f_{\hat{h}}(x)=-3^{\nicefrac{1}{3}}\pi\left[\mathrm{Ai}\left(-\frac{x}{3^{\nicefrac{1}{3}}}\right)\int_{x}^{0}\mathrm{Bi}\left(-\frac{t}{3^{\nicefrac{1}{3}}}\right)\hat{h}(t)\,dt+\mathrm{Bi}\left(-\frac{x}{3^{\nicefrac{1}{3}}}\right)\int_{-\infty}^{x}\mathrm{Ai}\left(-\frac{t}{3^{\nicefrac{1}{3}}}\right)\hat{h}(t)\,dt\right].\label{eq:f_two}
\end{equation}
Then the solution (\ref{eq:Stein_solution-sym-2}) follows putting
together both equations (\ref{eq:f_one}) and (\ref{eq:f_two}). It
remains to show that $f_{\hat{h}}\in\hat{\mathcal{D}}_{\nicefrac{1}{3}}$.
As $\hat{h}$ is a continuous function on $\mathbb{R}^{*}$, then
$f_{\hat{h}}\in C^{2}(\mathbb{R}^{*})$. At $x=0$, we have 
\[
f_{\hat{h}}(0)=-3^{-\nicefrac{1}{3}}\pi\mathrm{Bi}(0)\int_{0}^{\infty}M_{\nicefrac{1}{3}}(t)\big[h(t)-\mathbb{E}\big(h(Y)\big)\big]\,dt=0.
\]
and
\[
f_{\hat{h}}(0^{-})=-3^{-\nicefrac{1}{3}}\pi\mathrm{Bi}(0)\int_{-\infty}^{0}M_{\nicefrac{1}{3}}(-t)\big[h(t)-\mathbb{E}\big(h(-Y)\big)\big]\,dt=0,
\]
then $f_{\hat{h}}$ is continuous at $x=0$ and $f_{\hat{h}}(0)=0$.
On the other hand, 
\[
f_{\hat{h}}'(0)=-3^{-\nicefrac{1}{3}}\pi\mathrm{Bi}'(0)\int_{0}^{\infty}M_{\nicefrac{1}{3}}(t)\big[h(t)-\mathbb{E}\big(h(Y)\big)\big]\,dt=0
\]
and 
\[
f_{\hat{h}}'(0^{-})=-3^{\nicefrac{1}{3}}\pi\mathrm{Bi}'(0)\int_{-\infty}^{0}M_{\nicefrac{1}{3}}(-t)\big[h(t)-\mathbb{E}\big(h(-Y)\big)\big]\,dt=0.
\]
Therefore, $f_{\hat{h}}'$ is also continuous at $x=0$ and consequently
$f_{\hat{h}}\in C^{1}(\mathbb{R})$. It is easy to see that $f_{\hat{h}}''$
is continuous in $\mathbb{R}^{*}$ and 
\begin{eqnarray*}
f_{\hat{h}}''(0^{+}) & = & \hat{h}(0^{+})=h(0)-\mathbb{E}\big(h(Y)\big),\\
f_{\hat{h}}''(0^{-}) & = & \hat{h}(0^{-})=h(0)-\mathbb{E}\big(h(-Y)\big).
\end{eqnarray*}
Finally, we show that $f_{\hat{h}}$ and $f_{\hat{h}}'$ are bounded.
Moreover, as $f_{\hat{h}}$ satisfies the equation\ (\ref{eq:Stein_equation_sym})
it will follows that $f_{\hat{h}}''$ will be also bounded. On one
hand, for $x\ge0$, it follows from Lemma\ \ref{lem:Stein_solution}
that $f_{\hat{h}}$ given by equation\ (\ref{eq:f_one}) and its
derivatives are bounded, namely 
\begin{eqnarray*}
|f_{\hat{h}}(x)| & \le & 3^{\nicefrac{2}{3}}\pi\|\mathrm{Gi}\|_{\infty}\|\tilde{h}\|_{\infty},\\
|f_{\hat{h}}'(x)| & \leq & 3^{\nicefrac{1}{3}}\pi\|\mathrm{Gi}'\|_{\infty}\|\tilde{h}\|_{\infty},\\
|f_{\hat{h}}''(x)| & \le & \big(3^{-\nicefrac{2}{3}}\pi\sup_{x\in\mathbb{R}_{+}}|x\mathrm{Gi}(x)|+1\big)\|\tilde{h}\|_{\infty}.
\end{eqnarray*}
In a similar way for $x<0$, we estimate $f_{\hat{h}}$ given by equation\ (\ref{eq:f_two})
and its derivatives in order to show that $f_{\hat{h}}$ and its derivatives
are bounded. Thus, we conclude that $f_{\hat{h}}\in\hat{\mathcal{D}}_{\nicefrac{1}{3}}$.
\end{proof}
\begin{proof}[Proof of Proposition\ \ref{thm:charact_MW_sym}.]
\emph{Necessity}. Let $\hat{Y}$ be a random variable with $\hat{M}_{\nicefrac{1}{3}}$
distribution and $f\in\hat{\mathcal{D}}_{\nicefrac{1}{3}}$. Then,
it is clear that
\[
\int_{\mathbb{R}}f''(x)\hat{M}_{\nicefrac{1}{3}}(x)\,dx=\frac{1}{2}\left(\int_{-\infty}^{0}f''(x)M_{\nicefrac{1}{3}}(-x)\,dx+\int_{0}^{\infty}f''(x)M_{\nicefrac{1}{3}}(x)\,dx\right).
\]
Using the integration by parts and the asymptotic behavior of $M_{\nicefrac{1}{3}}$,
$M'_{\nicefrac{1}{3}}$ , see Appendix \ref{sec:special_functions},
we have
\begin{eqnarray*}
\int_{-\infty}^{0}f''(x)M_{\nicefrac{1}{3}}(-x)\,dx & = & f'(0)M_{\nicefrac{1}{3}}(0)+\int_{-\infty}^{0}f'(x)M'_{\nicefrac{1}{3}}(-x)\,dx\\
 & = & f'(0)M_{\nicefrac{1}{3}}(0)+\int_{-\infty}^{0}f(x)M''_{\nicefrac{1}{3}}(-x)\,dx.
\end{eqnarray*}
But we know that 
\[
M_{\nicefrac{1}{3}}''(-x)=-\frac{1}{3}xM_{\nicefrac{1}{3}}(-x),
\]
therefore we obtain 
\[
\int_{-\infty}^{0}f''(x)M_{\nicefrac{1}{3}}(-x)\,dx=f'(0)M_{\nicefrac{1}{3}}(0)-\frac{1}{3}\int_{-\infty}^{0}xf(x)M_{\nicefrac{1}{3}}(-x)\,dx
\]
equivalently 
\[
\int_{-\infty}^{0}\left(f''(x)+\frac{1}{3}xf(x)\right)M_{\nicefrac{1}{3}}(-x)\,dx=f'(0)M_{\nicefrac{1}{3}}(0).
\]
It follows from the proof of Theorem\ \ref{thm:charact_MW_dist}
that
\[
\int_{0}^{\infty}\left(f''(x)-\frac{1}{3}xf(x)\right)M_{\nicefrac{1}{3}}(x)\,dx=-f'(0)M_{\nicefrac{1}{3}}(0).
\]
Putting together, yields
\[
\int_{\mathbb{R}}\left(f''(x)+\frac{1}{3}|x|f(x)\right)\hat{M}_{\nicefrac{1}{3}}(x)\,dx=0.
\]
Therefore, we have
\[
\mathbb{E}\big((\hat{\mathcal{A}}_{\nicefrac{1}{3}}f)(\hat{Y})\big)=0.
\]

\noindent \emph{Sufficiency}. Let $h$ be a bounded continuous function.
By Lemma\ \ref{lem:Solution_Stein_sym}, the function $f_{\hat{h}}\in\hat{\mathcal{D}}_{\nicefrac{1}{3}}$
and satisfies the Stein equation: 
\[
(\hat{\mathcal{A}}_{\nicefrac{1}{3}}f_{\hat{h}})(x)=f_{\hat{h}}''(x)-\frac{1}{3}|x|f_{\hat{h}}(x)=\hat{h}(x).
\]
Then for $X$ a real-valued random variable satisfying the condition
(\ref{eq:sym_assumption_X}), we have 
\[
\mathbb{E}\big((\hat{\mathcal{A}}_{\nicefrac{1}{3}}f_{\hat{h}})(X)\big)=\mathbb{E}\big(\hat{h}(X)\big)=\mathbb{E}\big(h(X)\big)-\frac{1}{2}\big[\mathbb{E}\big(h(Y)\big)+\mathbb{E}\big(h(-Y)\big)\big].
\]
Since $\mathbb{E}((\hat{\mathcal{A}}_{\nicefrac{1}{3}}f_{h})(X))=0$
and $\mathbb{E}\big(h(\hat{Y})\big)=\frac{1}{2}\big[\mathbb{E}\big(h(Y)\big)+\mathbb{E}\big(h(-Y)\big)\big]$,
then 
\[
\mathbb{E}\big(h(X)\big)-\mathbb{E}\big(h(\hat{Y})\big)=0.
\]
Thus, $X$ and $\hat{Y}$ have the same law.

\end{proof}
\begin{rem}
\label{rem:Stein-sym-h}Assume that $X$ does not satisfies (\ref{eq:sym_assumption_X})
and take $h$ an even continuous function on $\mathbb{R}$, we have
\begin{enumerate}
\item $\hat{h}$ is continuous and consequently the solution $f_{\hat{h}}$
of the Stein equation (\ref{eq:Stein_solution-sym-2}) belongs to
$C_{b}^{2}(\mathbb{R})$, i.e., the space of bounded twice continuously
differentiable functions with bounded derivatives. 
\item In addition, we obtain that the random variables $|X|$ and $|\hat{Y}|$
have the same law but it is not enough to conclude that $X$ and $\hat{Y}$
have the same law.
\end{enumerate}
\end{rem}

\section{Conclusion}

\label{sec:outlook}We have characterized the distribution with density
$M_{\nicefrac{1}{3}}$ in $\mathbb{R}_{+}$ as well as its symmetrization
$\hat{M}_{\nicefrac{1}{3}}$ through the Stein method. The Stein operator
turns out to be the Airy equation, compare equations\ (\ref{eq:Airy_ODE})
and (\ref{eq:Stein-Operator}). The Stein equation corresponds to
the general inhomogeneous Airy equation\ (\ref{eq:Stein-Operator}).
As particular solution of the Stein equation we take a generalization
of the Scorer function $\mathrm{Gi}$, see equation\ (\ref{eq:ScorerGi_gen}).

The characterization of the class of distributions with density $M_{\nicefrac{1}{n}},$
$n=4,5,\ldots$ leads us to consider the Stein operator of the following
form
\begin{equation}
y^{(n-1)}-\frac{(-1)^{n}}{n}xy=0.\label{eq:hyper-Airy}
\end{equation}
This equation for $n\ge4$ is akin to the hyper-Airy differential
equation of order $n-1$, see \cite{Orszag1978}. In general for $q:=\nicefrac{1}{\beta}-1$,
$0<\beta<1$, the density $M_{\beta}$ satisfies the fractional differential
equation 
\begin{equation}
\frac{d^{q}}{dx^{q}}M_{\beta}(x)+\beta e^{\pm\nicefrac{i\pi}{\beta}}xM_{\beta}(x)=0.\label{eq:gene-hyper-Airy}
\end{equation}
In view of the above equation, the density $M_{\beta}$ is referred
in \cite{Mainardi1994} as a generalized hyper-Airy function. To find
a particular solution for the above equations\ (\ref{eq:hyper-Airy})
and (\ref{eq:gene-hyper-Airy}) seems to be a non trivial task and
here new ideas are needed.

\subsection*{Acknowledgments}

We would like to thank M.\ Röckner and M.\ Grothaus for helpful
discussions and comments during an earlier presentation of this work.
Financial support from FCT \textendash{} Funda{\c c\~a}o para a Ci{\^e}ncia
e a Tecnologia through the project UID/MAT/04674/2013 (CIMA) and the
Laboratory LIBMA form the University Cadi Ayyad Marrakech are grateful
acknowledged.

\appendix

\section{The Airy and Scorer functions}

\label{sec:special_functions}In this appendix we collect some properties
of the Airy and Scorer functions which are used throughout this paper.
We refer to the following books \cite{AS92,Olver2010,Olver1997,Vallee2004}
for more details and properties.

The second order homogeneous differential equation, known as Airy
equation 
\begin{equation}
y''-xy=0\label{eq:Airy_ode}
\end{equation}
has a pair of linear independent solutions $\mathrm{Ai}$ and $\mathrm{Bi}$,
called the Airy function of the first and second kind, respectively.
They are entire functions of $x$ with initial values 
\begin{eqnarray*}
\mathrm{Ai}(0) & = & \frac{1}{3^{\nicefrac{2}{3}}\Gamma\left(\frac{2}{3}\right)},\quad\mathrm{Ai}'(0)=-\frac{1}{3^{\nicefrac{1}{3}}\Gamma\left(\frac{1}{3}\right)},\\
\mathrm{Bi}(0) & = & \frac{1}{3^{\nicefrac{1}{6}}\Gamma\left(\frac{2}{3}\right)},\quad\mathrm{Bi}'(0)=\frac{3^{\nicefrac{1}{6}}}{\Gamma\left(\frac{1}{3}\right)},
\end{eqnarray*}
and their Wronskian is
\[
W\big(\mathrm{Ai}(x),\mathrm{Bi}(x)\big)=\frac{1}{\pi}.
\]
For the special case $x>0$, the Airy functions can be written in
terms of the Bessel functions 
\begin{eqnarray*}
\mathrm{Ai}(x) & = & \frac{1}{\pi}\sqrt{\frac{x}{3}}K_{\nicefrac{1}{3}}\left(\zeta\right),\qquad\zeta:=\frac{2}{3}x^{\nicefrac{3}{2}},\\
\mathrm{Bi}(x) & = & \sqrt{\frac{x}{3}}\left[I_{\nicefrac{1}{3}}\left(\zeta\right)+I_{-\nicefrac{1}{3}}\left(\zeta\right)\right],
\end{eqnarray*}
where $K_{\nicefrac{1}{3}}$ and $I_{\pm\nicefrac{1}{3}}$ are the
modified Bessel functions of the second and first second, respectively,
see \cite[Sec.~10.4]{AS92}. The asymptotic behavior of the Airy function
$\mathrm{Ai}$ and its derivative for $x\rightarrow\infty$
\begin{eqnarray}
\mathrm{Ai}(x)\sim\frac{1}{2}\pi^{-\nicefrac{1}{2}}x^{-\nicefrac{1}{4}}e^{-\zeta}, & \qquad & \mathrm{Ai}'(x)\sim-\frac{1}{2}\pi^{-\nicefrac{1}{2}}x^{\frac{1}{4}}e^{-\zeta}.\label{eq:AiryAi-asymptotic-positiv}
\end{eqnarray}
It is easy to see that the two linear independent solutions of the
general Airy equation 
\begin{equation}
y''-k^{2}xy=0\label{eq:Airy-ode-general}
\end{equation}
are $\mathrm{Ai}(k^{\nicefrac{2}{3}}x)$ and $\mathrm{Bi}(k^{\nicefrac{2}{3}}x)$.
The general solution of the inhomogeneous Airy equation 
\begin{equation}
y''-xy=\frac{1}{\pi}\label{eq:Airy_ode_non-homog_1}
\end{equation}
is
\[
y(x)=c_{1}\mathrm{Ai}(x)+c_{2}\mathrm{Bi}(x)+p(x),
\]
where $c_{1},c_{2}$ are arbitrary constants and $p(x)$ is any particular
solution of the equation\ (\ref{eq:Airy_ode_non-homog_1}). A standard
particular solution may be found by the method of variation of parameters
(see for example\ \cite{Collins2007}), called Scorer's function
$\mathrm{Gi}$ (also known as inhomogeneous Airy functions), given
by
\begin{equation}
\mathrm{Gi}(x):=\mathrm{Ai}(x)\int_{0}^{x}\mathrm{Bi}(t)\,dt+\mathrm{Bi}(x)\int_{x}^{\infty}\mathrm{Ai}(t)\,dt.\label{eq:ScorerGi-function}
\end{equation}
The Scorer function $\mathrm{Gi}$ is an entire bounded function of
$x.$ Its asymptotic as $x\rightarrow\infty$ is given by
\begin{equation}
\mathrm{Gi}(x)\sim\frac{1}{\pi x}\quad\Rightarrow\quad x\mathrm{Gi}(x)\sim\frac{1}{\pi}.\label{eq:ScorerGi_asymptotic}
\end{equation}
The derivative of the Scorer function $\mathrm{Gi}$ is given by
\[
\mathrm{Gi}'(x)=\frac{1}{3}\mathrm{Bi}'(x)+\int_{0}^{x}\big[\mathrm{Ai}'(x)\mathrm{Bi}(t)-\mathrm{Ai}(t)\mathrm{Bi}'(t)\big]\,dt
\]
and the following asymptotic as $x\rightarrow\infty$ holds
\begin{equation}
\mathrm{Gi}'(x)\sim-\frac{1}{\pi x^{2}},\quad x\in\mathbb{R}_{+}.\label{eq:Scorer_deriv_asymp}
\end{equation}

In Sections\ \ref{sec:Stein-MWright} and \ref{sec:Stein_MWright-sym}
we use a generalization of equation\ (\ref{eq:Airy_ode_non-homog_1}),
namely
\begin{equation}
y''-k^{2}xy=f(x),\label{eq:Airy_ode_non-homog_2}
\end{equation}
with $k=3^{-\nicefrac{1}{2}}$ and $f$ a bounded measurable function.
Using the above scheme, the general solution of equation\ (\ref{eq:Airy_ode_non-homog_2})
is given by
\[
y(x)=c_{1}\mathrm{Ai}(k^{\nicefrac{2}{3}}x)+c_{2}\mathrm{Bi}(k^{\nicefrac{2}{3}}x)+q(x),
\]
where, as before $c_{1},c_{2}$ are arbitrary constants and $q(x)$
is any particular solution of the equation\ (\ref{eq:Airy_ode_non-homog_2}).
One convenient choice of $q(x)$ is 
\begin{equation}
q(x)=-k^{\nicefrac{2}{3}}\pi\left(\mathrm{Ai}(k^{-\nicefrac{2}{3}}x)\int_{0}^{x}\mathrm{Bi}(k^{-\nicefrac{2}{3}}t)f(t)\,dt+\mathrm{Bi}(k^{-\nicefrac{2}{3}}x)\int_{x}^{\infty}\mathrm{Ai}(k^{-\nicefrac{2}{3}}t)f(t)\,dt\right).\label{eq:ScorerGi_gen}
\end{equation}

\bibliographystyle{alpha}

\begin{thebibliography}{GKMS14}

\bibitem[AS92]{AS92}
M.\ Abramowitz and I.\ A.\ Stegun, editors.
\newblock {\em Handbook of mathematical functions with formulas, graphs, and
  mathematical tables}.
\newblock Dover Publications Inc., New York, 1992.
\newblock Reprint of the 1972 edition.

\bibitem[BC05]{Barbour:2005ur}
A.\ D.\ Barbour and L.\ H.\ Y.\ Chen.
\newblock {\em {An introduction to Stein's method}}, volume~4 of {\em Lectures
  Notes Series}.
\newblock Notes Ser. Inst. Math. Sci. Natl. Univ. Singap., 2005.

\bibitem[CGS11]{CGS2011}
L.\ H.\ Y.\ Chen, L.\ Goldstein, and Q.-M.\ Shao.
\newblock {\em {Normal Approximation by Stein's Method}}.
\newblock Probability and Its Applications. Springer Berlin Heidelberg, Berlin,
  Heidelberg, 2011.

\bibitem[Col07]{Collins2007}
P.\ J.\ Collins.
\newblock {\em {Differential and Integral Equations}}.
\newblock Oxford University press, September 2007.

\bibitem[GKMS14]{GKMS2014}
R.\ Gorenflo, A.\ A.\ Kilbas, F.\ Mainardi, and V.\ R.\ Sergei.
\newblock {\em {Mittag-Leffler Functions, Related Topics and Applications}}.
\newblock Springer, 2014.

\bibitem[Mai94]{Mainardi1994a}
F.\ Mainardi.
\newblock On the initial value problem for the fractional diffusion-wave
  equation.
\newblock In S.\ Rionero and T.\ Ruggeri, editors, {\em Waves and Stability in
  Continuous Media (Bologna, 1993)}, volume~23 of {\em Advances in Mathematics
  for Applied Sciences}, pages 246--251, River Edge, NJ, USA, 1994. World
  Scientific.

\bibitem[MMP10]{Mainardi_Mura_Pagnini_2010}
F.\ Mainardi, A.\ Mura, and G.\ Pagnini.
\newblock The {$M$}-{W}right function in time-fractional diffusion processes: A
  tutorial survey.
\newblock {\em Int.\ J.\ Differential Equ.}, 2010:Art.\ ID 104505, 29, 2010.

\bibitem[MT95]{Mainardi1994}
F.\ Mainardi and M.\ Tomirotti.
\newblock {On a special function arising in the time fractional diffusion-wave
  equation}.
\newblock In P.\ Rusev, I.\ Dimovski, and V.\ Kiryakova, editors, {\em {Transform
  Methods and Special Functions, Sofia, 1994}}, pages 171--183. Science Culture
  Technology, Singapore, 1995.

\bibitem[OB78]{Orszag1978}
S.\ A.\ Orszag and C.\ M.\ Bender.
\newblock {\em {Advanced Mathematical Methods for Scientists and Engineers}}.
\newblock Mac Graw Hill, 1978.

\bibitem[OLBC10]{Olver2010}
F.\ W.\ J.\ Olver, D.\ W.\ Lozier, R.\ F.\ Boisvert, and C.\ W.\ Clark.
\newblock {\em NIST Handbook of Mathematical Functions}.
\newblock Cambridge University Press, 2010.

\bibitem[Olv97]{Olver1997}
F.\ W.\ J.\ Olver.
\newblock {\em Asymptotics and special functions}.
\newblock AKP Classics. A K Peters, Ltd., Wellesley, MA, 1997.

\bibitem[PR14]{Pike2014}
J.\ Pike and H.\ Ren.
\newblock {Stein's method and the Laplace distribution}.
\newblock {\em Lat.\ Am.\ J.\ Probab.\ Math.\ Stat.}, 11(1):571--587, 2014.

\bibitem[Ros11]{Ross2011}
N.\ Ross.
\newblock Fundamentals of {S}tein's method.
\newblock {\em Probab.\ Surv.}, 8:210--293, 2011.

\bibitem[Ste72]{Stein1972}
C.\ Stein.
\newblock {A bound for the error in the normal approximation to the
  distribution of a sum of dependent random variables}.
\newblock In L.\ M.\ Le\ Cam, J.\ Neyman, and E.\ L.\ Scott, editors, {\em
  Proceedings of the Sixth Berkeley Symposium on Mathematical Statistics and
  Probability}, volume~2, pages 583--602. University of California Press,
  Berkeley, Calif., 1972.

\bibitem[Ste86]{Stein:1986tz}
C.\ Stein.
\newblock {\em {Approximate Computation of Expectations}}.
\newblock Lecture Notes-Monograph Series. Institute of Mathematical Statistics,
  1986.

\bibitem[VS04]{Vallee2004}
O.\ Vall{\'e}e and M.\ Soares.
\newblock {\em Airy functions and applications to physics}.
\newblock World Scientific, 2004.

\end{thebibliography}

\end{document}